\newcommand{\junk}[1]{}
\newcommand{\formal}[1]{\ensuremath{\textsf{#1}}}
\newcommand{\dword}[1]{\textit{#1}}
\newcommand{\graph}[1]{\ensuremath{\mathit #1}}
\newcommand{\B}{\ensuremath{\mathcal{B}}}
\newcommand{\inside}{\ensuremath{\formal{in}}}
\newcommand{\outside}{\ensuremath{\formal{out}}}
\newcommand{\armin}[2]{\ensuremath{(#1)^{#2}_{\inside}}}
\newcommand{\armout}[2]{\ensuremath{(#1)^{#2}_{\outside}}}
\newcommand{\armvar}[3]{\ensuremath{(#1)^{#2}_{#3}}}
\newcommand{\edge}[2]{\ensuremath{ \{#1, #2\}  }}
\newcommand{\arm}{\graph{A}}
\newcommand{\setbar}{\ensuremath{:}}
\newtheorem{defn}{Definition}[section]
\newtheorem{thm}[defn]{Theorem}
\newtheorem{lemma}[defn]{Lemma}
\newtheorem{claim}[defn]{Claim}
\newtheorem*{lemmaNoNum}{Lemma}
\newtheorem*{obs}{Observation}
\title {Comparing the power of cops to zombies in pursuit-evasion games}
\author{David Offner\\
\small Department of Mathematics and Computer Science\\[-0.8ex]
\small Westminster College\\[-0.8ex] 
\small New Wilmington, PA, U.S.A.\\
\small\tt offnerde@westminster.edu\\
\and
Kerry Ojakian\\
\small Department of Mathematics and Computer Science\\[-0.8ex]
\small City University of New York \\[-0.8ex]
\small Bronx, NY, U.S.A\\
\small\tt kerry.ojakian@bcc.cuny.edu
}
\begin{document}
\maketitle

\begin{abstract}  

We compare two kinds of pursuit-evasion games played on graphs.
In Cops and Robbers, the cops can move strategically to adjacent vertices as they please, while in a new variant, called deterministic Zombies and Survivors, the zombies (the counterpart of the cops) are required to always move towards the survivor (the counterpart of the robber).
The cop number of a graph is the minimum number of cops required to catch the robber on that graph; the zombie number of a graph is the minimum number of zombies required to catch the survivor on that graph. We answer two questions from the 2016 paper of Fitzpatrick, Howell, Messinger, and Pike.  We show that for any $m \ge k \ge 1$, there is a graph with zombie number $m$ and cop number $k$.  We also show that the zombie number of the 
$n$-dimensional hypercube is $\lceil 2n/3\rceil$.

\end{abstract}

Keywords: Cops and Robbers; Zombies and Survivors; Pursuit-evasion games;
Hypercube

\section{Introduction}

The game of Cops and Robbers is a perfect-information two-player pursuit-evasion game played on a  graph.  One player controls a group of cops and the other player controls a single robber. To begin the game, the cops and robber each choose vertices to occupy, with the cops choosing first.  Play then alternates between the cops and the robber, with the cops moving first. On a turn a player may move to an adjacent vertex or stay still.  If any cop and the robber ever occupy the same vertex, the robber is caught and the cops win.  On a graph \graph{G}, the fewest number of cops required to catch the robber is called the \dword{cop number} and denoted
$c(\graph{G})$.  If $c(\graph{G})=k$ we say that \graph{G} is $k$-cop-win. 
 The game was introduced by Nowakowski and Winkler \cite{NW83}, and Quilliot \cite{Qui78}. A nice introduction to the game and its many variants is found in 
the book by Bonato and Nowakowski \cite{BN11}. 

In this paper we discuss a variation of Cops and Robbers introduced by Fitzpatrick, Howell, Messinger, and Pike \cite{FHMP16} called deterministic Zombies and Survivors;
a probabilistic version was introduced by  Bonato, Mitsche, Perez-Gimenez, and Pralat in \cite{BMGP_ProbZombies_2016}.
In deterministic Zombies and Survivors, the cops are replaced by zombies and the robber is replaced by a survivor.
The rules of this variant are the same as Cops and Robbers, except for the following significant restriction on the zombies: each zombie must move on every turn, and furthermore, must move closer to the survivor on every turn. If there are multiple moves available to the zombies, they can coordinate and choose their moves intelligently. The number of zombies required to capture the survivor on a graph \graph{G} is known as the \dword{zombie number} and denoted $z(\graph{G})$.

We focus on a comparison between the zombie number and the cop number.
Since any zombie strategy can be played by the same number of cops, for all graphs \graph{G}, 
$z(\graph{G}) \ge c(\graph{G})$. Fitzpatrick, Howell, Messinger, and Pike \cite{FHMP16} 
gave examples of graphs where $z(\graph{G}) > c(\graph{G})$ and asked about the relationship between these two parameters. Using hypercube graphs, they noted that the gap $z(\graph{G})-c(\graph{G})$ can be arbitrarily large.  They also observed that the graph $\graph{G}_5$ (See Figure~\ref{G5}) is 1-cop-win but requires 2 zombies to win, 
and so $z(\graph{G})/c(\graph{G})$ can be at least 2.  In Question~19, they asked how large the ratio $z(\graph{G})/c(\graph{G})$ can be.   We show that any ratio of 1 or larger is possible, as in Section~\ref{finalg}, we describe a family of graphs $\graph{Z}_{k,m}$ such that for any integers $m \ge k \ge 1$, $z(\graph{Z}_{k,m}) =m$ and $c(\graph{Z}_{k,m}) = k$. 
These graphs are constructed by combining a so-called base graph that has cop and zombie number $k$ with ``arms'' on which 1 cop can catch a robber, but many zombies may be required to catch a survivor.  Thus $k$ cops can win by either catching the robber on the base graph, or forcing the robber onto one of the arms, and then catching the robber on the arm.  However while $k$ zombies can force the survivor onto one arm of the graph, it requires more zombies to then catch the survivor.  We describe the construction of the arm graphs in Section~\ref{constr}, and then describe the base graph and final construction in Section~\ref{zkmsec}. 

In Section~\ref{cube} we prove that Conjecture~18 from \cite{FHMP16} is true, that is, the zombie number for the $n$-dimensional hypercube is $\lceil 2n/3\rceil$.
This conjecture is also proved by Fitzpatrick in a different manner in
\cite{FitzpatrickArxiv2018} as a corollary of a more general result.
Note that the cop number of the $n$-dimensional hypercube is 
$\lceil (n+1)/2 \rceil$, as shown by Maamoun and Meyniel in \cite{MM87}. 

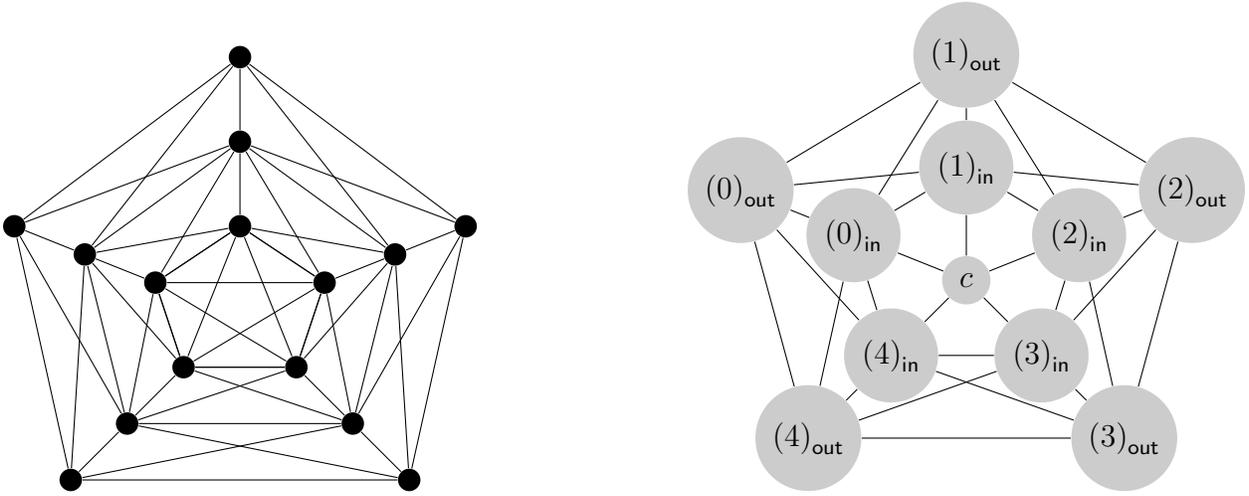
\begin{figure}
\begin{center}
  \begin{tikzpicture}[every node/.style={circle, inner sep=3pt, fill=black}, scale=.75]
      \node (11) at (-1,-1) {};
      \node (21) at (-2,-2) {};
      \node (31) at (-3,-3) {};
      \node (12) at (1,-1) {};
      \node (22) at (2,-2) {};
      \node (32) at (3,-3) {};
      \node (13) at (1.5,.5) {};
      \node (23) at (2.75,1) {};
      \node (33) at (4,1.5) {};
      \node (14) at (0,1.5) {};
      \node (24) at (0,3) {};
      \node (34) at (0,4.5) {};
      \node (10) at (-1.5,.5) {};
      \node (20) at (-2.75,1) {};
      \node (30) at (-4,1.5) {};
    \foreach \from/\to in {11/21, 21/31, 12/22, 22/32, 13/23, 23/33, 14/24, 24/34, 10/20, 20/30}
    \draw (\from) -- (\to);
    \foreach \from/\to in {11/12, 11/13, 11/14, 11/10, 12/13, 12/14, 12/10, 13/14, 13/10, 14/10}
    \draw (\from) -- (\to);
    \foreach \from/\to in {11/12, 12/13, 13/14, 14/10, 10/11, 20/21, 21/22, 22/23, 23/24, 24/20, 31/32, 32/33, 33/34, 34/30, 30/31}
    \draw (\from) -- (\to);
        \foreach \from/\to in {11/22, 12/23, 13/24, 14/20, 10/21, 20/31, 21/32, 22/33, 23/34, 24/30, 31/22, 32/23, 33/24, 34/20, 30/21, 21/12, 22/13, 23/14, 24/10, 20/11}
    \draw (\from) -- (\to);

    \end{tikzpicture}
    \hfill
  \begin{tikzpicture}[every node/.style={circle, fill=black!20}]
      \node (11) at (-1,-1) {\armin{4}{}}; 
      \node (21) at (-2.1,-2.1) {\armout{4}{}}; 
      \node (12) at (1,-1) {\armin{3}{}}; 
      \node (22) at (2.1,-2.1) {\armout{3}{}}; 
      \node (13) at (1.5,.6) {\armin{2}{}}; 
      \node (23) at (3,1.2) {\armout{2}{}}; 
      \node (14) at (0,1.5) {\armin{1}{}}; 
      \node (24) at (0,3) {\armout{1}{}}; 
      \node (10) at (-1.5,.6) {\armin{0}{}}; 
      \node (20) at (-3,1.2) {\armout{0}{}}; 
      \node (c) at (0,0) {$c$};
    \foreach \from/\to in {11/21, 12/22,  13/23, 14/24, 10/20}
    \draw (\from) -- (\to);
\foreach \from/\to in {11/c,  12/c, 13/c, 14/c, 10/c}
    \draw (\from) -- (\to);
    \foreach \from/\to in {11/12, 12/13, 13/14, 14/10, 10/11, 20/21, 21/22, 22/23, 23/24, 24/20}
    \draw (\from) -- (\to);
        \foreach \from/\to in {11/22, 12/23, 13/24, 14/20, 10/21, 21/12, 22/13, 23/14, 24/10, 20/11}
    \draw (\from) -- (\to);
    \end{tikzpicture}
\end{center}
\caption{The graphs $\graph{G}_5$ (left) and \graph{T} (right)}
\label{G5}
\end{figure}

\section{The key building block: The arm graph}\label{constr}

In this section we define what we call an \dword{arm graph}, constructed using copies of a graph we call \graph{T}.  Then we prove two key properties about the arm graphs in Lemmas~\ref{thm_one_arm} and \ref{1armUB}.  In both lemmas we describe the same kind of graph: some graph with an arm graph attached to it. The first lemma will describe a scenario where the survivor has a winning strategy, while the second lemma will describe a scenario where the zombies have a winning strategy.

For a graph \graph{G} we denote its vertex set by $V(\graph{G})$, its edge set by $E(\graph{G})$,
and an edge between $x$ and $y$ is denoted \edge{x}{y}.
The graph  \graph{T} is shown on the right in Figure ~\ref{G5}.
Since a typical arm graph will contain multiple copies of $T$, we formally define the graph $\graph{T}_x$, isomorphic to \graph{T}, as follows, where the parameter $x$ is an
integer. 
\begin{align*} 
V(\graph{T}_x)&= \{ \ \armin{i}{x} \ \setbar \ 0 \le i \le 4 \ \} \ \cup \
 \{ \ \armout{i}{x} \ \setbar \ 0 \le i \le 4 \ \}
\ \cup \ \{c^x\}\\
E(\graph{T}_x)&= \{ \ \edge{ \armvar{i}{x}{a} }{ \armvar{j}{x}{b} } \ \setbar \ |i-j| = 1 \hbox{ or } 4\ \} \
\cup \ \{\ \edge{ \armin{i}{x} }{ \armout{i}{x} }, \ \edge{\armin{i}{x}}{c^x} \ \setbar \ 0 \le i \le 4 \ \} 
\end{align*}
Note that in the above definition,the variables $i$ and $j$ range over the integers from 0 to 4, inclusive, while the variables $a$ and $b$
range over the formal symbols \inside \ and \outside, whose names are meant to remind us, respectively, of ``inside'' vertices and ``outside'' vertices.
The graph \graph{T} is similar to other graphs appearing in the literature, such as the graphs $\graph{G}'$ and $\graph{G}_5$  from \cite{FHMP16} and the graph in the proof of Proposition 6 from \cite{BoyOBoy2011}.  

We now define the \dword{arm graphs} $A_n$ for any integer $n \ge 0$.  See Figure~\ref{ARM3} for a drawing of $\arm_3$. 
\begin{defn}
For $n \ge 0$, define $\arm_n$ to
consist of the vertices \armout{2}{-1} and \armout{0}{n}, along with graphs $\graph{T}_0, \graph{T}_1, \ldots, \graph{T}_{n-1}$, with the following edges added: $\{\edge{ \armout{2}{x} }{ \armout{0}{x+1} } \ \setbar \ -1 \le x \le n-1\}$.
\end{defn}
Note that $\arm_0$ is the graph with two vertices, \armout{2}{-1} and  \armout{0}{0}, and a single edge between these two vertices.

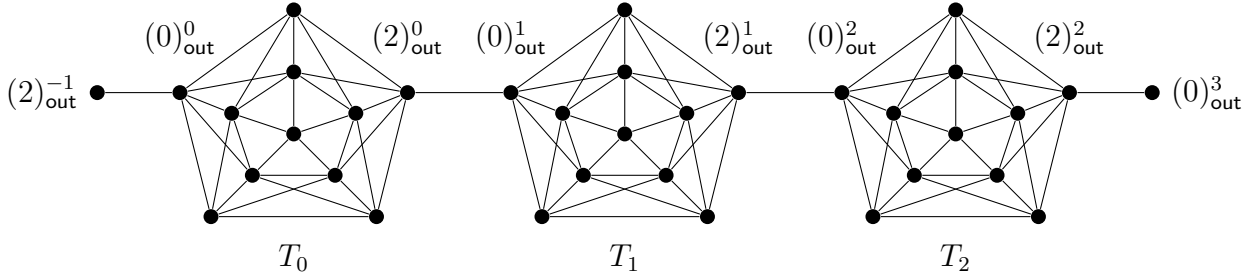
\begin{figure}
\begin{center}
  \begin{tikzpicture}[every node/.style={circle, inner sep=2pt, fill=black}, scale=.55]
      \node (11) at (-1,-1) {};
      \node (21) at (-2,-2) {};
      \node (12) at (1,-1) {};
      \node (22) at (2,-2) {};
      \node (13) at (1.5,.5) {};
      \node (23) at (2.75,1) [label=above: {\armout{2}{0}}]{};
      \node (14) at (0,1.5) {};
      \node (24) at (0,3) {};
      \node (10) at (-1.5,.5) {};
      \node (20) at (-2.75,1) [label=above: {\armout{0}{0}} ] {};
      \node (211) at (-4.75,1) [label=left: {\armout{2}{-1}} ] {};
      \node (c) at (0,0) {};
            \node[fill=none,rectangle] at (0,-3) {$T_0$};  
    \foreach \from/\to in {20/211, 11/21, 12/22, 13/23, 14/24, 10/20}
    \draw (\from) -- (\to);
    \foreach \from/\to in {11/c, 12/c, 13/c, 14/c, 10/c}
    \draw (\from) -- (\to);
    \foreach \from/\to in {11/12, 12/13, 13/14, 14/10, 10/11, 20/21, 21/22, 22/23, 23/24, 24/20}
    \draw (\from) -- (\to);
        \foreach \from/\to in {11/22, 12/23, 13/24, 14/20, 10/21, 21/12, 22/13, 23/14, 24/10, 20/11}
    \draw (\from) -- (\to);
      \node (11a) at (7,-1) {};
      \node (21a) at (6,-2) {};
      \node (12a) at (9,-1) {};
      \node (22a) at (10,-2) {};
      \node (13a) at (9.5,.5) {};
      \node (23a) at (10.75,1) [label=above: {\armout{2}{1}}]{};
      \node (14a) at (8,1.5) {};
      \node (24a) at (8,3) {};
      \node (10a) at (6.5,.5) {};
      \node (20a) at (5.25,1) [label=above: {\armout{0}{1}}] {};
      \node (ca) at (8,0) {};
      \node[fill=none,rectangle] at (8,-3) {$T_1$};  
    \foreach \from/\to in {11a/21a, 12a/22a, 13a/23a, 14a/24a, 10a/20a}
    \draw (\from) -- (\to);
    \foreach \from/\to in {11a/ca, 12a/ca, 13a/ca, 14a/ca, 10a/ca, 23/20a}
    \draw (\from) -- (\to);
    \foreach \from/\to in {11a/12a, 12a/13a, 13a/14a, 14a/10a, 10a/11a, 20a/21a, 21a/22a, 22a/23a, 23a/24a, 24a/20a}
    \draw (\from) -- (\to);
        \foreach \from/\to in {11a/22a, 12a/23a, 13a/24a, 14a/20a, 10a/21a, 21a/12a, 22a/13a, 23a/14a, 24a/10a, 20a/11a}
    \draw (\from) -- (\to);
      \node (11b) at (15,-1) {};
      \node (21b) at (14,-2) {};
      \node (12b) at (17,-1) {};
      \node (22b) at (18,-2) {};
      \node (13b) at (17.5,.5) {};
      \node (23b) at (18.75,1)  [label=above: {\armout{2}{2}}]{};
      \node (14b) at (16,1.5) {};
      \node (24b) at (16,3) {};
      \node (10b) at (14.5,.5) {};
      \node (20b) at (13.25,1) [label=above: {\armout{0}{2}}]{};
      \node (cb) at (16,0) {};
      \node (203) at (20.75,1) [label=right: {\armout{0}{3}} ] {};
      \node[fill=none,rectangle] at (16,-3) {$T_2$};  
    \foreach \from/\to in {11b/21b, 12b/22b, 13b/23b, 14b/24b, 10b/20b}
    \draw (\from) -- (\to);
    \foreach \from/\to in {11b/cb, 12b/cb, 13b/cb, 14b/cb, 10b/cb, 23a/20b}
    \draw (\from) -- (\to);
    \foreach \from/\to in {11b/12b, 12b/13b, 13b/14b, 14b/10b, 10b/11b, 20b/21b, 21b/22b, 22b/23b, 23b/24b, 24b/20b}
    \draw (\from) -- (\to);
        \foreach \from/\to in {11b/22b, 12b/23b, 13b/24b, 14b/20b, 10b/21b, 21b/12b, 22b/13b, 23b/14b, 24b/10b, 20b/11b, 23b/203}
    \draw (\from) -- (\to);
    \end{tikzpicture}
\end{center}
\caption{The arm graph $\arm_3$}
\label{ARM3}
\end{figure}

We adopt the following notation. If there are $m$ zombies, we typically denote them as $z_1, z_2, \ldots, z_m$.  Let $d_i$ denote the distance from the survivor to $z_i$, a parameter that will typically change during the play of a game.  For integers $n$ and $r$, let $(n)_r$ denote the remainder $n$ (mod $r$); for example: $(7)_5 = 2$ and $(10)_5 = 0$.

\begin{lemma} \label{thm_one_arm}
Consider the graph obtained by combining $\arm_n$ with any graph \graph{H} by adding an edge from $\armout{2}{-1} \in V(\arm_n)$ to a vertex $v \in V(\graph{H})$. Suppose the survivor is at vertex \armout{0}{0}, and there are $m$ zombies $z_1, z_2, \ldots, z_m$, all on vertices of \graph{H}, such that $d_1 \le d_2 \le \cdots \le d_m$. Then the survivor has a winning strategy if
\[(d_2 - d_1)_5 + (d_3 - d_2)_5 + \cdots + (d_m - d_{m-1})_5 \le n-1.\]
\end{lemma}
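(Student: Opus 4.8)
The plan is to hand the survivor an explicit strategy built from two maneuvers performed inside a single copy $\graph{T}_x$, and to verify it by induction on the number $m$ of zombies. Call the path $\armout{2}{-1},\armout{0}{0},\armout{1}{0},\armout{2}{0},\armout{0}{1},\dots,\armout{2}{n-1},\armout{0}{n}$ the \emph{spine} of $\arm_n$; the survivor will always stand on a spine vertex. First one records the elementary metric facts inside one copy: $\graph{T}_x$ has diameter $2$, its five outer vertices are pairwise at distance $\le 2$ and behave like a $5$-cycle, and the only vertices of $\graph{T}_x$ with a neighbour outside $\graph{T}_x$ are $\armout{0}{x}$ and $\armout{2}{x}$. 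These yield the \emph{tracking fact}: if the survivor stands on the spine with its nearest zombie at distance $2$ and then walks along the spine directly away from that zombie, the zombie is forced to follow along the spine, staying at distance exactly $2$, with an essentially unique move each round. Two maneuvers exploit one copy. In a \emph{reversal} the survivor runs once entirely around the outer $5$-cycle of $\graph{T}_x$ ($5$ rounds, zero net displacement): afterwards the survivor faces the opposite way along the spine, while a tracking zombie, forced around the same $5$-cycle, is again at distance $2$. In a \emph{detour} the survivor crosses $\graph{T}_x$ the long way around the $5$-cycle instead of the short way, spending one extra round; a tracking zombie must spend it too, but a zombie still a full copy farther back crosses $\graph{T}_x$ the short way later and thereby gains one step on the survivor, hence on the tracking zombie.

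For the base case $m=1$ (where $n\ge1$) the survivor runs back and forth along the arm forever, using a reversal in $\graph{T}_{n-1}$ at the right end and in $\graph{T}_0$ at the left end; by the tracking fact the zombie is pinned at distance $2$ and the survivor always has a safe move — the reversals keeping it inside the arm, never into $\graph{H}$ — so it evades forever. The same idea supplies the endgame for general $m$: once every gap $d_i-d_{i-1}$ between consecutive zombies is a multiple of $5$, the mod‑$5$ symmetry of the outer cycles keeps the zombies phase‑locked, each reversal dragging every trailing zombie $5$ steps closer at a time, so the zombies coalesce one at a time onto the nearest one while it is held at distance $2$; a single spare copy then lets the survivor bounce forever.

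For the inductive step let $m\ge 2$, let $d_1\le\dots\le d_m$ with $\sum_{i=2}^m(d_i-d_{i-1})_5\le n-1$, and assume the claim for $m-1$ zombies. Starting at $\armout{0}{0}$, the survivor retreats rightward, performing a detour in each of the first $(d_2-d_1)_5$ copies $\graph{T}_0,\dots,\graph{T}_{(d_2-d_1)_5-1}$. Each detour leaves $d_1$ fixed and decreases every $d_j$ with $j\ge2$ by $1$, so afterwards $d_2-d_1$ is a multiple of $5$; then $z_2$ becomes a permanent shadow of $z_1$ and may be deleted. The survivor now stands at $\armout{0}{(d_2-d_1)_5}$ facing a copy of $\arm_{\,n-(d_2-d_1)_5}$ and the $m-1$ zombies $z_1,z_3,\dots,z_m$ at distances $d_1,\;d_3-(d_2-d_1)_5,\;\dots,\;d_m-(d_2-d_1)_5$. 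Since $d_3-d_1-(d_2-d_1)_5\equiv d_3-d_2\pmod 5$, the new sum of consecutive $(\,\cdot\,)_5$‑gaps equals $\sum_{i=2}^m(d_i-d_{i-1})_5-(d_2-d_1)_5\le\bigl(n-(d_2-d_1)_5\bigr)-1$, so the induction hypothesis applies and the survivor wins.

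The real work lies in making the two maneuvers rigorous against an optimally coordinated zombie team. Four points need care: (i) that during a detour or reversal the pursuing zombie is genuinely \emph{forced} to follow — it cannot shortcut through $c^x$ or the inside cycle — and that the survivor always retains a move keeping every zombie at distance $\ge2$; (ii) that the nearest zombie can be held at distance exactly $2$ (it may start farther away) without spending copies beyond the stated budget; (iii) timing the detours so that $z_1$ follows while every $z_j$ with $j\ge2$ does not — transparent when $d_2-d_1$ is large, but when $d_2-d_1\in\{1,2\}$ the pair lies within a single copy and either a sharper maneuver or a ``fat cluster'' analysis is required — and checking that later detours for $z_3,\dots,z_m$ do not unlock an already‑aligned pair; and (iv) confirming the phase‑locking claim, i.e.\ that the whole strategy respects the mod‑$5$ symmetry of each $\graph{T}_x$. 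I would handle these by fixing a normal form for the survivor's (vertex, direction) state together with a lower bound on how tightly the zombies can be packed behind it, and checking round‑by‑round that every maneuver preserves it; this verification is the technical heart of the argument.
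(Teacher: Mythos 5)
Your two maneuvers are exactly the engine of the paper's proof: the ``reversal'' is the paper's Case~1 (the survivor circles the outer $5$-cycle of $\graph{T}_x$, pulling every non-adjacent zombie $5$ closer without changing residues mod $5$), and the ``detour'' is Case~2 (the survivor crosses to \armout{2}{x} the long way and exits to \armout{0}{x+1}, costing each trailing zombie's gap exactly $1$), and your mod-$5$ budget accounting against $n-1$ is the right invariant. But as written the argument has a genuine gap beyond the verification you explicitly defer. Your induction on $m$ does not type-check: after the $(d_2-d_1)_5$ detours, $z_1$ and $z_2$ are adjacent to the survivor \emph{inside} the arm (at \armout{1}{x}, \armin{1}{x}, or \armout{2}{x-1}), whereas the statement you are inducting on requires all zombies to sit on vertices of \graph{H} with the survivor at \armout{0}{0}. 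So the inductive hypothesis cannot be invoked at the position you reach; you need a strengthened induction that tolerates a cluster of zombies pinned at distance $1$. That strengthening is precisely what the paper's ``$x$-survivor-good'' positions provide (survivor at \armout{0}{x}, adjacent zombies confined to three named vertices, residual budget $n-1-x$), and the whole proof is then a single invariant-maintenance argument rather than an induction on $m$.

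Relatedly, the claim that once $d_2\equiv d_1\pmod 5$ the zombie $z_2$ ``becomes a permanent shadow of $z_1$ and may be deleted'' is not justified: congruence of distances does not co-locate the zombies, and $z_2$ remains an adversarially controlled piece that could, for instance, be at distance $3$ when you attempt a reversal --- a reversal is only safe against zombies that are either already pinned at distance $1$ or at distance at least $6$ (this is why the paper's Case~1 carries the hypothesis $d_{i+1}\ge 6$). The paper never deletes zombies; it shows $d_1+\cdots+d_m$ strictly decreases until every zombie has been absorbed into the adjacent cluster, at which point Case~1 repeats forever. Your points (i)--(iv) are indeed the technical heart, and the paper's proof consists almost entirely of discharging them; a proposal that names them but leaves them open is a correct plan, not yet a proof.
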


\begin{proof}
\newcommand{\distone}{\ensuremath{\mathcal{Z}}}
\newcommand{\sgood}{-survivor-good}

We describe a winning strategy for the survivor. Let \distone \ refer to the set of zombies that are adjacent to the survivor when it is the survivor's turn.  The set \distone \ will change as the game goes on.
 
For $0 \le x \le n-1$, define a position to be $x$\dword{-survivor-good} if the following
conditions are satisfied:
\begin{enumerate}

\item The survivor is at vertex \armout{0}{x} and it is the survivor's turn to move.


\item The zombies not in \distone \ are in 
$\graph{H} \cup \{\armout{2}{-1}\} \cup \graph{T}_0 \cup \cdots \cup \graph{T}_{x-1}$.

\item
Every zombie in \distone \ is at one these vertices:
\armout{1}{x}, \armin{1}{x}, or \armout{2}{x-1}.

\item \label{cond_s_good}
$(d_2 - d_1)_5 + (d_3 - d_2)_5 + \cdots + (d_m - d_{m-1})_5 \le n-1-x$.

\end{enumerate}
The survivor strategy, depending on cases we will describe, will be to move from an $x$\sgood \ position to a new $x$\sgood \ position or to an $(x+1)$\sgood \ position. Either way we will guarantee that the sum $d_1 + \cdots + d_m$ strictly decreases until all the zombies are in \distone.

The survivor starts at \armout{0}{0} and should not move from \armout{0}{0} until a zombie arrives at the vertex \armout{2}{-1}, adjacent to the survivor. 
At this point, the zombies in \distone \ are all at vertex \armout{2}{-1}, so by the assumptions of the lemma the position is $0$\sgood.

We now describe what the survivor does in an $x$\sgood \ position.
The survivor is at vertex \armout{0}{x}.  Suppose
$\distone = \{ z_1, \ldots, z_i \}$, and if \distone \ does not include all the zombies, then
for zombies not in \distone, zombie $z_{i+1}$ is a zombie whose distance, $d_{i+1}$, to the survivor is minimum.  
From the vertex \armout{0}{x} the survivor's first
move will always be to \armout{4}{x} and thus all of the zombies in \distone \ will move to \armout{0}{x} or \armin{0}{x}.
Subsequently, there will be 2 different courses of move sequences, depending on  
whether the distance of $z_{i+1}$  from the survivor is more than 5 or not.

Case 1.  Suppose $d_{i+1} \ge 6$ or all the zombies are in \distone.  In this case, the survivor will follow a cycle around the outside vertices of $\graph{T}_x$, eventually arriving back at \armout{0}{x}, i.e. the survivor will move from \armout{4}{x} to \armout{3}{x} to \armout{2}{x} to
\armout{1}{x}, and finally return to \armout{0}{x}.  The zombies in \distone \ are forced to follow around, so that when the survivor returns to \armout{0}{x}, all the zombies who were in \distone \ will remain in \distone,  and be either at \armout{1}{x} or \armin{1}{x}.  Since the survivor made 5 moves in cycling around $\graph{T}_x$, any zombies that were not in \distone \ will be 5 steps closer to the survivor. Since $d_{i+1}$ was  initially at least 6, only the zombies initially in \distone \ can be in $\graph{T}_x$, though some other zombies may have joined \distone, by being at the vertex \armout{2}{x-1}.  Note the position is still $x$\sgood. 
Condition~\ref{cond_s_good} of being $x$\sgood \ holds because the sum on the left of the inequality has not changed:
$d_j$ is unchanged if $z_j$ started off in \distone, and otherwise $d_j$ is reduced by 5. 
If all the zombies were initially in \distone, then they 
still are, and otherwise the sum $d_1 + \cdots + d_m$ has strictly decreased.

Case 2.   Suppose $d_{i+1} \le 5$.  In this case, the survivor moves from
 \armout{4}{x} to \armout{3}{x} to \armout{2}{x}, and then
to \armout{0}{x+1} in $\graph{T}_{x+1}$.  The zombies that were in \distone \ remain in \distone, ending up at $\armout{2}{x}$. For a zombie that did not start out in \distone, its distance to the survivor was at least 2, so when the survivor moves to \armout{2}{x}, that zombie
is at \armout{0}{x}, or in $\graph{T}_y$ for $y < x$, or at \armout{2}{-1}, or in $H$.  Thus when the survivor moves from \armout{3}{x} to \armout{2}{x}, the distance to any of these zombies does not change, and the zombies not in \distone \ will get one step closer to the survivor. 
 Thus we note that
the sum $d_1 + \cdots + d_m$ has strictly decreased.  The position is now 
$(x+1)$\sgood. Most of the conditions are immediate.
We point out that Condition~\ref{cond_s_good} of being
$(x+1)$\sgood \ holds.
The value of
$x$ increased by 1, and so the right side of the inequality decreased by one.  As for the left side, it continues to be the case that 
$(d_2 - d_1)_5 = \cdots = (d_i - d_{i-1})_5 = 0$, but 
$(d_{i+1} - d_i)_5$ goes down by exactly 1, while 
$(d_{j+1} - d_j)_5$ is unchanged for $j \ge i+1$.  So the left side also decreases by one.

In both cases the sum $d_1 + \cdots + d_m$ is strictly reduced, so eventually the left side of the inequality in Condition~\ref{cond_s_good} must be reduced to zero (and is guaranteed to be reduced to zero should the survivor ever reach \armout{0}{n-1}). 
At that point the strategy will remain indefinitely in Case 1 with \distone \ eventually containing all the zombies and all zombies cycling around some copy of \graph{T}, one step behind the survivor.
\end{proof}

\begin{lemma} \label{1armUB}
Consider the graph obtained by combining $\arm_n$ with a graph \graph{H} by adding an edge from  $\armout{2}{-1} \in V(\arm_n)$ to a vertex $v \in V(\graph{H})$. Suppose the survivor is on a vertex in $\arm_n$ and there are zombies $z_1, \ldots, z_m$ such that $z_1$ is at $v$, and for all $1 \le i \le m-1$, $0 \le d_{i+1} - d_i \le 4$, and $d_m - d_1 \ge n$.
Then the zombies have a winning strategy.
\end{lemma}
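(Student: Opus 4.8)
The plan is to mirror the structure of Lemma~\ref{thm_one_arm} but from the zombies' point of view, showing that the zombies can make steady progress down the arm toward the survivor. I would introduce a notion of $x$\emph{-zombie-good} position, analogous to $x$-survivor-good, capturing the invariant that: the survivor is somewhere in $\graph{T}_x \cup \cdots \cup \graph{T}_{n-1} \cup \{\armout{0}{n}\}$ (i.e.\ has been pushed at least $x$ copies of $\graph{T}$ into the arm), the zombies are suitably positioned behind the survivor inside the arm (the closest zombie $z_1$ at or near $\armout{2}{x-1}$, with the others strung out behind it along the arm or still in $\graph{H} \cup \{\armout{2}{-1}\}$), and the gap condition $d_m - d_1 \ge n - x$ still holds. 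The idea is that each time the zombies advance through one more copy of $\graph{T}$, the right-hand side $n-x$ drops by one, so the hypothesis $d_m - d_1 \ge n$ guarantees enough ``room'' to keep advancing until $d_m - d_1$ is forced down, at which point all the zombies can be brought to within distance comparable to the structure of a single copy of $\graph{T}$ and the survivor is trapped in one copy of $\graph{T}$ with nowhere to run.

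The first step is to get from the hypothesised starting configuration to a $0$-zombie-good (or some $x$-zombie-good) position: since $z_1$ is at $v$, adjacent to $\armout{2}{-1}$, and the survivor is somewhere in $\arm_n$, the zombies move toward the survivor; because the arm's only connection to $\graph{H}$ is through $\armout{2}{-1}$, any shortest path from a zombie in $\graph{H}$ to the survivor passes through $v$ and then $\armout{2}{-1}$, so the zombies funnel into the arm in the prescribed order, and after finitely many moves we reach a zombie-good position. The second step, the inductive heart of the argument, is to show that from an $x$-zombie-good position the zombies can either force the survivor from $\graph{T}_x$ into $\graph{T}_{x+1}$ (reaching an $(x+1)$-zombie-good position) or decrease the potential $d_1 + d_2 + \cdots + d_m$ while staying $x$-zombie-good, and that they cannot be stuck doing neither forever. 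Here I would case on the survivor's behavior within $\graph{T}_x$: if the survivor stays inside $\graph{T}_x$ (cycling around its outside or ducking to inside vertices or $c^x$), the leading zombie $z_1$ chases it around and the net effect is that the trailing zombies each gain a step, reducing the potential; if the survivor moves ``forward'' to $\armout{0}{x+1}$, then $z_1$ follows to $\armout{2}{x}$ and we are in an $(x+1)$-zombie-good position with $n - x$ decreased by one. One must check the survivor cannot move ``backward'' out of $\graph{T}_x$ past $z_1$: since $z_1$ is essentially one step behind on the unique corridor, a backward move by the survivor would either be blocked or would immediately decrease $d_1$ toward capture. Finally, once the potential can no longer decrease and $x$ can no longer increase, we argue the survivor is confined to a single copy of $\graph{T}$ with the full squad of zombies close behind, and invoke (or directly verify) that in this local situation the zombies capture the survivor — this is exactly the kind of finite case-check that the graph $\graph{T}$ was engineered to make work.

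The main obstacle I anticipate is the bookkeeping in the inductive step: correctly tracking where each zombie $z_i$ sits relative to $z_1$ as the configuration shifts by one copy of $\graph{T}$, and verifying that the inequalities $0 \le d_{i+1} - d_i \le 4$ are preserved (or that the relevant weakening of them that appears in the zombie-good invariant is preserved) through both a ``cycle in $\graph{T}_x$'' step and a ``advance to $\graph{T}_{x+1}$'' step. In particular, since $\graph{T}_x$ has the $5$-cycle structure on its outside vertices (edges when $|i-j| = 1$ or $4$), the distance bookkeeping involves the same ``mod $5$'' phenomena that drove Lemma~\ref{thm_one_arm}, and I expect the clean statement to be that the quantity $d_m - d_1$ is exactly what decreases by one per copy of $\graph{T}$ traversed, matching the hypothesis $d_m - d_1 \ge n$ to the $n$ copies available. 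The endgame lemma — that zombies with these distance constraints win on a single $\graph{T}$ — should be a small self-contained finite argument, possibly reusing the fact that $\graph{T}$ resembles the known $2$-zombie-win graphs from \cite{FHMP16,BoyOBoy2011}, but I would want to write it out explicitly rather than cite by analogy.
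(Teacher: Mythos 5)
Your overall architecture matches the paper's: an $x$-zombie-good invariant in which the survivor has been pushed into $\graph{T}_x$ or beyond, a lead group of zombies at the entrance to $\graph{T}_x$, the remaining zombies strung out behind with $0 \le d_{i+1}-d_i \le 4$, and the gap condition $d_m - d_1 \ge n-x$ losing one unit of slack per copy of $\graph{T}$ traversed. The initial funneling through $v$ and \armout{2}{-1}, and the termination at $x=n$ (where $z_1$ and the survivor coincide at \armout{0}{n}), are also as in the paper.

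There is, however, a genuine gap at the heart of the inductive step, and your framing of that step points in a direction that does not quite work. You propose that if the survivor stalls in $\graph{T}_x$, the lead zombie ``chases it around'' while the potential $d_1+\cdots+d_m$ decreases, with capture occurring only ``once the potential can no longer decrease'' and the full squad is close behind. But a lone pursuer on the outside $5$-cycle of $\graph{T}_x$ never catches the survivor---that is precisely what makes Lemma~\ref{thm_one_arm} true---and mere proximity of many zombies is not a capture argument. What actually closes the case in the paper is a specific division of labor between exactly two agents: the lead zombie $z_1$ uses the hub $c^x$ and the inside vertices $\armin{i}{x}$ to dominate the survivor's options inside $\graph{T}_x$, while the first \emph{strictly trailing} zombie $z_{i+1}$---which exists whenever $x<n$ precisely because $d_m - d_1 \ge n-x > 0$---arrives at \armout{0}{x} and seals the survivor's only retreat. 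The resulting forcing sequence is short (a handful of moves, resolved by a case analysis on the survivor's location in $\graph{T}_x$) and ends in exactly one of two outcomes: the survivor is caught inside $\graph{T}_x$, or it is pushed through \armout{2}{x} into $\graph{T}_{x+1}$ with the invariant restored. This bounded-length dichotomy, not a decreasing potential, is the content of the lemma; you correctly flag the ``finite case-check on $\graph{T}$'' as something to write out, but that case-check \emph{is} the proof, and without identifying the $c^x$-plus-rear-blocker mechanism the sketch cannot be completed. Relatedly, your invariant should make the trailing zombie's role explicit: the conditions $0 \le d_{i+1}-d_i \le 4$ are there to guarantee that $z_{i+1}$ reaches a blocking position adjacent to \armout{0}{x} in time for the endgame, and the gap condition is there to guarantee such a zombie exists at every stage before $x=n$.
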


\begin{proof}
\newcommand{\zgood}{-zombie-good}

We describe a winning strategy for the zombies. 
For $0 \le x \le n$, define a position to be $x$\dword{-zombie-good} if the following conditions hold:
\begin{enumerate}

\item The survivor is in $\graph{T}_y$, for $y \ge x$, or 
at the vertex \armout{0}{n}.

\item For some $i \ge 1$, all the zombies $z_1, \ldots, z_i$ are at
vertex \armout{0}{x}.

\item The zombies $z_{i+1}, \ldots, z_m$ are in
$\graph{H} \cup \{\armout{2}{-1}\} \cup \graph{T}_0 \cup \cdots \cup \graph{T}_{x-1}$.

\item
For $1 \le i \le m-1$, \ $0 \le d_{i+1} - d_i \le 4$.

\item \label{2zombies}
$d_m - d_1 \ge n-x$.

\end{enumerate}
The proof relies on the following claim, which we prove after giving the main argument.
\begin{claim}\label{maintech}
For $0 \le x < n$, if the game is in an $x$\zgood \ position, then the zombies can either catch the survivor in $\graph{T}_x$ or force an $(x+1)$\zgood \ position. 
\end{claim}

Throughout the analysis, we will assume the survivor does not move adjacent to a zombie unless forced to, 
since then the zombies immediately win and we are done. In the first two zombie moves, $z_1$ moves from $v$ to \armout{2}{-1} to \armout{0}{0}, with the other zombies following behind, and so for all $i$, $d_{i+1}-d_i$ remains unchanged, and the conditions of the lemma guarantee that this position is $0$\zgood.  By repeatedly applying Claim~\ref{maintech}, the zombies either catch the survivor, or force $x$\zgood \ positions for $x=1, 2, \ldots, n$.  In an $n$\zgood \ position both the survivor and $z_1$ are at \armout{0}{n}, so the survivor is caught.

Now we prove Claim~\ref{maintech}. Suppose the position is 
$x$\zgood, where $0 \le x < n$. 
First we dispense with the case where the survivor is in $\graph{T}_y$ where $y > x$ or at \armout{0}{n}.  In this case, $z_1$ can move to \armout{1}{x}, and since \armout{1}{x} is adjacent to  \armout{2}{x}, the survivor cannot move into $\graph{T}_x$ without being caught.  Thus on the next two moves, $z_1$ should move to \armout{2}{x}, and then 
\armout{0}{x+1}.  Since the relative distances between zombies do not change during these three moves, the position is now $(x+1)$\zgood.  

Thus we only need to consider the case in which
the position is $x$\zgood \ and the survivor is actually in
$\graph{T}_x$.  The zombies $z_1, \ldots, z_i$ all begin on vertex \armin{0}{x}, while the zombies $z_{i+1}, \ldots, z_m$ (Condition~\ref{2zombies} implies there must be at least one) begin in \graph{H} or at \armout{2}{-1} or in some $\graph{T}_y$ with $y < x$.  We will describe a strategy for $z_1$, which all the zombies $z_1, \ldots, z_i$ will follow, and a strategy for $z_{i+1}$.  It is not necessary to explicitly describe the strategy for $z_{i+2}, \ldots, z_m$, so long as they move toward the survivor on each turn. First 
we make an observation, which we will use repeatedly below.  
\begin{obs}
Suppose that $z_1$ is at $c^x$, $z_{i+1}$ is adjacent to \armout{0}{x} or \armout{2}{x-1}, and the survivor is at either \armin{0}{x} or \armin{4}{x}.
Then these two zombies can catch the survivor.
\end{obs}
We point out why the observation is true.
If the survivor moves to \armout{4}{x}, \armout{3}{x}, \armout{1}{x}or \armout{0}{x}, $z_1$ should move to 
\armin{4}{x}, \armin{3}{x}, \armin{1}{x}, or \armin{0}{x}, respectively.  In the first three cases, $z_1$ alone can catch the survivor on the next move.  In the fourth case (i.e. the survivor is at \armout{0}{x} and $z_1$ is at \armin{0}{x}), the zombie $z_{i+1}$ will have already caught the survivor before the survivor moves to \armout{0}{x} or will be at \armout{2}{x-1}.  Thus the survivor is already caught or will be caught on the zombies' next move.

Now we describe the strategies that $z_1$ and $z_{i+1}$ follow in order to either catch the survivor in $\graph{T}_x$ or force the position to be $(x+1)$\zgood.
The strategy for $z_{i+1}$ is simple: if adjacent to the survivor it catches the survivor; otherwise, it moves towards vertex 
\armout{0}{x} and once there, it moves to 
\armin{1}{x} or \armin{4}{x}, whichever is closer to the survivor.  If more moves are required of $z_{i+1}$, it just moves towards the survivor in any fashion. 
  The strategy for $z_1$ will depend on the survivor's position, so we consider two cases (we leave out vertices adjacent to
\armout{0}{x} since then $z_1$ immediately catches the survivor).

\textbf{Case 1:} Suppose the survivor is at $c^x$, \armin{2}{x} or \armout{2}{x}.
  Zombie $z_1$ should move to \armin{1}{x}, and then there are three subcases.
\begin{enumerate}[label={(\alph*)}]

\item If the survivor was at \armout{2}{x} and moves to \armout{0}{x+1}, then $z_1$ should move to \armout{2}{x} and then \armout{0}{x+1} for a total of three moves, which is a shortest path
from \armout{0}{x} to \armout{0}{x+1}. Note that now the survivor is in $\graph{T}_{x+1}$ or at \armout{0}{n} and no values of $d_i$ have changed. Thus the position is $(x+1)$\zgood.

\item \label{cx_to_4x}
If the survivor moves to \armin{4}{x}, $z_1$ should move to $c^x$.
 Then the zombies apply the observation to win.

\item If the survivor moves to \armin{3}{x} or \armout{3}{x}, then $z_1$ should move to \armin{2}{x}. This forces the survivor to move to \armin{4}{x} or \armout{4}{x}, and in either case $z_1$ should move to \armin{3}{x}. If the survivor is not already caught by $z_{i+1}$ and moves to \armout{0}{x} then the survivor will be caught by $z_{i+1}$.  Otherwise, the survivor must move to \armin{0}{x}.  In this case, $z_1$ should move to $c^x$ and 
the zombies apply the observation to win.

\end{enumerate}

\textbf{Case 2:} If the survivor is at \armin{3}{x} or \armout{3}{x}, then $z_1$ should move to \armin{4}{x}. This forces the survivor to move to \armin{2}{x} or \armout{2}{x}, 
and in either case $z_1$ should move to \armin{3}{x}. There are then two subcases.

\begin{enumerate}[label={(\alph*)}]

\item If the survivor is at \armout{2}{x} and moves to \armout{0}{x+1}, then $z_1$ should move to \armout{2}{x} and then to \armout{0}{x+1} for a total of 4 moves.  In this case, note that the zombies $z_{i+1}, \ldots, z_m$ are all one step closer to the survivor than before, because the survivor move from \armin{3}{x} or \armout{3}{x} to \armout{2}{x} did not increase the distance to any of these zombies.  To verify that Condition~\ref{2zombies} still holds, note that at the end of this sequence of moves, $d_{i+1}, \ldots, d_m$ have all decreased by 1, while the value of $x$ has increased by 1.  Thus the new position is $(x+1)$\zgood.

\item If the survivor moves to \armin{1}{x} or \armout{1}{x}, 
then $z_1$ should move to \armin{2}{x}. This forces the survivor to move to 
\armin{0}{x} or \armout{0}{x}. If the survivor moves to \armout{0}{x} then the survivor will be caught by $z_{i+1}$.  Otherwise, the survivor must move to \armin{0}{x}.  In this case, $z_1$ should move to $c^x$ and
the zombies apply the observation to win.

\end{enumerate}
\end{proof}

\section{A graph family exhibiting all feasible combinations of cop and zombie number}\label{zkmsec}

For integers $m \ge k \ge 1$, we will describe a graph with cop number $k$ and zombie number $m$, by first describing the base graph in Definition~\ref{def_basegraph}, and then describing how we hang arm graphs and paths off of the base graph.

\subsection{The base graph} \label{sec_bibd}

Let $V$ be a set of points, and $\mathcal{B}$ be a collection of subsets of $V$, called \dword{blocks}. Then the pair $(V,\B)$ is called a \dword{design}.  A design $(V,\B)$ is called a \dword{balanced incomplete block design} with parameters $(v,k,\lambda)$ if $|V|=v$, each block in $\B$ has cardinality $k$, and each pair of distinct points in $V$ is in exactly $\lambda$ blocks.

Recall that from any collection of sets, one can define its \dword{intersection graph}: The graph where the vertices are the sets and two sets are adjacent if their intersection is nonempty. Given any balanced incomplete block design $(V,\B)$, its \dword{block intersection graph} is the intersection graph of its blocks. Let $BIG(v,k,\lambda)$ denote the set of block intersection graphs of balanced incomplete block designs with parameters $(v, k,\lambda)$.  The cop numbers of graphs in $BIG(v,k,\lambda)$ are studied by Bonato and Burgess in Section 5 of \cite{BonBurg2017_Designs}.

\begin{lemmaNoNum} 
(Lemma 5.4 from \cite{BonBurg2017_Designs})
If $v > k(k-1)^2 + 1$, and $\graph{G} \in BIG(v,k,1)$ then $c(\graph{G}) \ge k$.
\end{lemmaNoNum}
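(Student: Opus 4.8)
The plan is to pass to the underlying design and exhibit an explicit evasion strategy for the robber against $k-1$ cops; producing such a strategy shows $c(\graph{G})\ge k$. Fix a balanced incomplete block design $(V,\B)$ with parameters $(v,k,1)$ whose block intersection graph is $\graph{G}$, so the vertices are blocks, two blocks are adjacent precisely when they share a point, and every pair of points lies in a unique block. Recall that each point then lies in exactly $r=\frac{v-1}{k-1}$ blocks, and note that the hypothesis $v>k(k-1)^2+1$ is exactly the inequality $r>k(k-1)$; this is the only numerical input.

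The heart of the proof is a counting fact I would isolate as a claim: \emph{if $C_1,\dots,C_{k-1}$ are blocks and $p$ is a point lying in none of them, then some block through $p$ is disjoint from all of $C_1,\dots,C_{k-1}$.} Indeed, there are exactly $r$ blocks through $p$; for each fixed $C_i$, the blocks through $p$ meeting $C_i$ are just the $k$ blocks joining $p$ to a point of $C_i$, and these $k$ blocks are distinct, since a block containing $p$ together with two points of $C_i$ would be the unique block on those two points, namely $C_i$, contradicting $p\notin C_i$. So at most $k(k-1)$ of the $r$ blocks through $p$ meet some $C_i$, and $r>k(k-1)$ leaves at least one meeting none of them.

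Using this, the robber maintains the invariant that after each of its moves, its block is disjoint from every cop's block (equivalently, its distance in $\graph{G}$ to every cop is at least $2$). For the opening move, once the cops occupy blocks $C_1,\dots,C_{k-1}$, there is a point $p$ in none of them because $v>k(k-1)\ge |C_1\cup\dots\cup C_{k-1}|$, and the counting fact produces a block through $p$ disjoint from all the cops, on which the robber starts. For a later move, suppose the invariant holds with the robber on a block $R$ disjoint from every current cop block; after the cops move, no cop can have stepped onto $R$ (that would require a cop adjacent to $R$), so the new cop blocks $C_1',\dots,C_{k-1}'$ each satisfy $|R\cap C_i'|\le 1$. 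Since $|R|=k>k-1$, some point $p\in R$ lies in no $C_i'$, and the counting fact yields a block $R'\ni p$ disjoint from every $C_i'$. As $p\in R\cap R'$, the vertex $R'$ equals or is adjacent to $R$ in $\graph{G}$, so the robber moves there and the invariant is restored. A robber that forever stays at distance at least $2$ from all cops is never caught, so $k-1$ cops lose and $c(\graph{G})\ge k$.

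The routine parts are the standard design identities (the value of $r$, uniqueness of the block on two points) and checking that the distance-$2$ invariant genuinely keeps the robber safe during the cops' turn. The one step with real content is the counting fact, and the thing to get right is that it must be applied to a point \emph{of the robber's current block}: the $k-1$ cop blocks can each block at most one of the $k$ points of $R$, so a free point $p$ always survives, and escaping through $p$ is possible precisely because $r>k(k-1)$ — which is where, and why, the exact threshold $v>k(k-1)^2+1$ is forced.
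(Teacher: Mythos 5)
Your proof is correct and complete: the counting of the $r=\frac{v-1}{k-1}>k(k-1)$ blocks through a free point of the robber's current block, combined with the distance-$2$ invariant, is exactly the right argument, and you correctly identify why the threshold $v>k(k-1)^2+1$ is what makes it work. Note that the paper itself states this lemma without proof, importing it from Bonato and Burgess; your argument is essentially the standard one given in that source, so there is nothing to flag.
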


Lemma 5.1 from \cite{BonBurg2017_Designs} states if  
$\graph{G} \in BIG(v,k,\lambda)$ then $c(\graph{G}) \le k$.  In the next lemma, we follow their proof; restricting our attention to $BIG(v,k,1)$, we show that $k$ zombies catch a survivor on these graphs.

\begin{lemma} \label{lemma_zombie_upper}
If 
$\graph{G} \in BIG(v,k,1)$, then $z(\graph{G}) \le k$.
\end{lemma}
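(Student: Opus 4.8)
The plan is to exhibit an explicit winning strategy for $k$ zombies on any $\graph{G}\in BIG(v,k,1)$, following the cop strategy in the proof of Lemma~5.1 of \cite{BonBurg2017_Designs} but checking carefully that every move we ask of the zombies is a legal ``move closer'' step. Throughout I identify the vertices of $\graph{G}$ with the blocks of an underlying $(v,k,1)$-design: distinct vertices are adjacent exactly when the corresponding blocks intersect, and because $\lambda=1$, two distinct blocks meet in at most one point.

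The first ingredient is the structural fact that $\graph{G}$ has diameter at most $2$: any two disjoint blocks $B,B'$ have a common neighbor, namely the unique block through a chosen point of $B$ and a chosen point of $B'$ (it meets each of them and equals neither). The consequence I want is that at the start of any zombie turn, if some zombie sits on a block meeting the survivor's block $S$, then the survivor is caught --- that zombie is at distance $0$, or at distance $1$ and hence forced to step onto $S$. So in order to survive, the survivor must be on a block disjoint from every zombie's block after its opening placement and after each of its own moves; in particular, just after the survivor places, every zombie is at distance exactly $2$ from $S$ (at least $2$ by disjointness, at most $2$ by the diameter bound). If the survivor cannot even find such an opening block it is caught on the first zombie turn and there is nothing to prove, so assume it can.

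The core step is a ``covering'' lemma: a single zombie at distance $2$ from the survivor can move so as to end up on a block meeting any prescribed point of $S$. Suppose a zombie is on a block $B$ disjoint from $S$, let $p\in S$, and pick any $b\in B$; let $B_p$ be the unique block through $p$ and $b$. Then $B_p$ is adjacent to $B$ (they share $b$; and $B_p\ne B$ since $p\notin B$), and $B_p$ is at distance $1$ from $S$ (it contains $p\in S$; and $B_p\ne S$ since $b\notin S$); moreover $B_p\cap S=\{p\}$. Hence moving the zombie from $B$ to $B_p$ lowers its distance to the survivor from $2$ to $1$, so it is a legal zombie move, and the zombie is free to choose which point of $S$ it lands on. Carrying this out in concert --- label the points of $S$ as $s_1,\dots,s_k$ and send zombie $i$ to a block meeting $S$ exactly in $s_i$ --- puts the $k$ zombies on $k$ distinct blocks whose intersections with $S$ together cover all of $S$.

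Now it is the survivor's turn, with every point of $S$ covered. If the survivor stays on $S$, a zombie at distance $1$ catches it on the next turn. If the survivor moves to a block $S'\ne S$, then $S'\cap S$ is a single point $s_j$ (again by $\lambda=1$); the zombie on the block $B_j$ guarding $s_j$ has $s_j\in B_j\cap S'$, so either $S'=B_j$ and the survivor has walked onto that zombie, or $B_j$ is at distance $1$ from $S'$ and that zombie catches the survivor on the next turn. Either way the survivor is caught, so $z(\graph{G})\le k$. The step I expect to require the most care is the covering lemma: one must verify that $B_p$ is genuinely at distance exactly $1$ from $S$ and genuinely adjacent to $B$ (both using $\lambda=1$, through $b\notin S$ and $p\notin B$ respectively), and that, because the zombies start at distance $2$, a step to a block at distance $1$ is at once forced and freely choosable among all such blocks --- which is exactly what lets the zombies distribute themselves one per point of $S$.
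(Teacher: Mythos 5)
Your proof is correct and is essentially the paper's argument: both have the $k$ zombies exploit $\lambda=1$ to route themselves, one per point of the survivor's block, onto blocks that together cover that block, after which any survivor move lands on a block meeting one of theirs. The only difference is cosmetic --- the paper starts all zombies on blocks through a common point $y$ and routes each one through $y$ (so adjacency and the distance drop are immediate), whereas you allow an arbitrary start and route each zombie through an arbitrary point of its own block, verifying the diameter-$2$ and legal-move details along the way.
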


\begin{proof}
Let $(V,\B)$ be a balanced incomplete block design with parameters $(v,k,1)$ and 
\graph{G} be its block intersection graph. We describe how $k$ zombies $z_1, \ldots, z_k$ can catch the survivor on \graph{G}.  Pick any $y \in V$ and have each zombie start at a block which contains $y$ (multiple zombies 
can start at the same vertex). Suppose the survivor starts at a block $B=\{x_1, \ldots, x_k\} \in \B$.  We can assume $y \notin B$ as otherwise the zombies could win on the next turn.
For $1 \le i \le k$, zombie $z_i$ should move to the unique block containing $y$ and $x_i$. Note that this move is a legal zombie move since it puts each zombie adjacent to the survivor.
Now wherever the survivor moves, it is to a block which intersects $B$, so at least one zombie is adjacent to the survivor and catches the survivor.
\end{proof}

\begin{defn} \label{def_basegraph}
For $k \ge 2$, let $\graph{ZC}_k$ be a specified element of $BIG(v,k,1)$ for some $v > k(k-1)^2 + 1$. 
Let $\graph{ZC}_1$ be a single
vertex.
\end{defn}

Wilson \cite{Wilson1972, Wilson1975} proved that for any positive integer $k$ there are infinitely many $v$ for which some balanced incomplete block design with parameters $(v, k, 1)$ exists, so the graph $\graph{ZC}_k$ exists for all $k$. Of course for a given $k$ there may be many possible choices for $v$, and many graphs in $BIG(v,k,1)$, so we have arbitrarily chosen one representative to be the graph $\graph{ZC}_k$. 
Since the zombie number is at least as large as the cop number for any graph, we use Lemma 5.4 from \cite{BonBurg2017_Designs} and
Lemma~\ref{lemma_zombie_upper}, in order to conclude that $c(\graph{ZC}_k) = z(\graph{ZC}_k) = k$.

\subsection{The final construction}\label{finalg}

We now describe the graph $\graph{Z}_{k,m}$.  For $n \ge 1$, denote by $P_n$ the path with $V(P_n) = \{v_1, \ldots, v_n\}$ and $E(P_n) = \{\edge{v_i}{v_{i+1}} \ \setbar \ 1 \le i \le n-1\}$.

\begin{defn}
Let $m \ge k \ge 1$.  Then $\graph{Z}_{k,m}$ is the graph obtained from $\graph{ZC}_k$ by adding the following for each vertex $v \in V(\graph{ZC}_k)$.

\begin{itemize}

\item
One copy of $\graph{P}_{4(m-k)}$ and an edge from $v$ to vertex $v_1$.

\item
$m$ copies of $\arm_{4(m-k)}$ and an edge from $v$ to the vertex \armout{2}{-1}
in each copy of $\arm_{4(m-k)}$.  

\end{itemize}

\end{defn}
The inclusion of the paths in $Z_{k,m}$ is primarily to provide an easy way to describe a winning starting position for $m$ zombies.

\begin{thm}
The graph $\graph{Z}_{k,m}$ has cop number $k$ and zombie number $m$.
\end{thm}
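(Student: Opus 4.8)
The plan is to prove the cop number and zombie number statements separately, each by establishing a matching upper and lower bound. For the cop number, I would first show $c(\graph{Z}_{k,m}) \le k$: place $k$ cops on the copy of $\graph{ZC}_k$ and play the optimal cop strategy there (which exists since $c(\graph{ZC}_k) = k$). Either the robber is caught on $\graph{ZC}_k$, or the robber is forced onto one of the pendant structures (a path $P_{4(m-k)}$ or an arm $\arm_{4(m-k)}$) hanging off some vertex $v$. Since each such structure is attached to $\graph{ZC}_k$ at a single vertex $v$ (a cut vertex), one cop can sit at $v$ to confine the robber while another cop walks out to catch it — paths and arms are each $1$-cop-win as standard ``retract-like'' structures, and the arm graph in particular is cop-win because a cop can clear it one copy of $\graph{T}$ at a time. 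So $k$ cops suffice. For the lower bound $c(\graph{Z}_{k,m}) \ge k$: since $\graph{Z}_{k,m}$ contains $\graph{ZC}_k$ as a retract (the pendant structures can each be retracted back onto their attachment vertex $v$), and cop number is monotone under retracts, we get $c(\graph{Z}_{k,m}) \ge c(\graph{ZC}_k) = k$.

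For the zombie number, the lower bound $z(\graph{Z}_{k,m}) \ge m$ is where the arm lemma does its work. I would argue the survivor can survive against any $m-1$ zombies. The survivor first plays in $\graph{ZC}_k$: with only $m-1 \ge k$... wait, we need more care — $m-1$ could exceed $k$, so the survivor cannot necessarily avoid capture on $\graph{ZC}_k$ alone. Instead, the survivor should immediately retreat down one of the arms attached to some vertex $v$, chosen so that the zombies' distances to the survivor are spread out. The key point is that with $m-1$ zombies, when the survivor is deep in an arm $\arm_n$ with $n = 4(m-k)$, the quantity $(d_2-d_1)_5 + \cdots + (d_{m-1}-d_{m-2})_5$ has only $m-2$ terms, each at most $4$, so it is at most $4(m-2)$; I would need to check that the survivor can arrange, via initial maneuvering and the structure of $\graph{Z}_{k,m}$ (in particular that there are $m$ arms per vertex, so the survivor has room to pick a favorable arm), that this sum is at most $n-1 = 4(m-k)-1$. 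This is the delicate combinatorial bookkeeping — making precise how the survivor chooses which arm to enter and when, so that the hypothesis of Lemma~\ref{thm_one_arm} is met; I expect this to be the main obstacle. Once the hypothesis holds, Lemma~\ref{thm_one_arm} gives the survivor a winning strategy, so $m-1$ zombies cannot win.

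For the zombie upper bound $z(\graph{Z}_{k,m}) \le m$, I would exhibit a winning strategy for $m$ zombies. Start $k$ of the zombies on $\graph{ZC}_k$ playing the strategy from Lemma~\ref{lemma_zombie_upper} (giving $z(\graph{ZC}_k) \le k$), and use the remaining $m-k$ zombies, together with the paths $P_{4(m-k)}$, to set up a starting configuration on the arms — this is the stated purpose of including the paths in the construction. The $k$ zombies either catch the survivor on $\graph{ZC}_k$ or force it onto a pendant structure attached to some vertex $v$. If the survivor flees down the path, the zombies trap and catch it there easily (a path hanging off a cut vertex). If it flees down an arm $\arm_n$, I claim the $m$ zombies can funnel onto that arm so that they satisfy the hypotheses of Lemma~\ref{1armUB}: namely $z_1$ at the attachment vertex $v$, consecutive distance gaps $0 \le d_{i+1}-d_i \le 4$, and $d_m - d_1 \ge n = 4(m-k)$. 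The paths give the $m-k$ extra zombies a staging area to spread themselves out at the required distances; I would verify that with $m$ zombies and gaps of size up to $4$, a spread of $d_m - d_1 = 4(m-1) \ge 4(m-k)$ is achievable. Then Lemma~\ref{1armUB} finishes the capture. Assembling these four bounds yields $c(\graph{Z}_{k,m}) = k$ and $z(\graph{Z}_{k,m}) = m$.
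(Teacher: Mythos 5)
Your overall architecture (four separate bounds, with the two arm lemmas and the $BIG(v,k,1)$ lemmas as the main tools) matches the paper's, but the step you yourself flag as ``the main obstacle'' in the zombie lower bound is a genuine gap, and the resolution is not just bookkeeping --- it requires an idea you don't have. Your naive estimate $(d_2-d_1)_5+\cdots+(d_{m-1}-d_{m-2})_5\le 4(m-2)$ exceeds the required bound $n-1=4(m-k)-1$ whenever $k\ge 2$, and ``immediately retreating down an arm chosen so the distances are spread out'' cannot in general fix this: the $m-1$ zombies may start positioned so that every consecutive gap is $\equiv 4 \pmod 5$, and the survivor has no control over that. The paper's fix is a two-phase strategy. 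In Phase 1 the survivor plays the winning \emph{robber} evasion strategy on $\graph{ZC}_k$, regarding each zombie adjacent to $\graph{ZC}_k$ as sitting at its projection and ignoring all others; since $c(\graph{ZC}_k)=k$, the survivor cannot be caught while fewer than $k$ zombies touch $\graph{ZC}_k$. Only once at least $k$ zombies touch $\graph{ZC}_k$ does the survivor step onto an empty copy of $\arm_{4(m-k)}$ (which exists because there are $m$ arms per vertex and only $m-1$ zombies). Because $\graph{ZC}_k$ has diameter $2$, those $k$ zombies are then all at distance $1$ through $4$ from the survivor at \armout{0}{0}, so the $k-1$ gaps among them contribute at most $3$ in total, and the remaining $m-1-k$ gaps contribute at most $4$ each, giving $3+4(m-1-k)=4(m-k)-1$, exactly the hypothesis of Lemma~\ref{thm_one_arm}. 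Without forcing $k$ zombies to converge first, the hypothesis can simply fail.

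A secondary gap is in the cop upper bound for $k=1$. Your primary suggestion --- one cop confining the robber at the cut vertex while a second cop clears the pendant structure --- uses two cops and is unavailable when $k=1$; your fallback, that the arm is $1$-cop-win ``as a retract-like structure,'' is an assertion, not an argument, and it is precisely the nontrivial design property of $\graph{T}$ (the paper notes that for $k\ge 2$ one could replace $\graph{T}$ by $C_5$, and that $k=1$ is the case that forces the more elaborate gadget). You need the explicit single-cop strategy: from \armout{0}{x} the cop moves to \armin{0}{x} and then to $c^x$, after which the robber's only safe vertices are the outer vertices of $\graph{T}_x$ or vertices further out along the arm; the cop then either corners the robber against an \armout{i}{x} via the matching \armin{i}{x}, or advances through \armin{2}{x} and \armout{2}{x} to \armout{0}{x+1}, pushing the robber one $\graph{T}$ further down the arm until it is trapped at \armout{0}{4(m-k)}. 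The remaining pieces of your proposal (the retract argument for $c\ge k$, and the zombie upper bound using $k$ zombies on $\graph{ZC}_k$ plus $m-k$ zombies staged at $v_4,v_8,\ldots,v_{4(m-k)}$ on the path to meet the hypotheses of Lemma~\ref{1armUB}) are essentially the paper's argument and are sound.
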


\begin{proof} 
We first show that $c(\graph{Z}_{m,k}) = k$. If a vertex $w$ 
is not in $\graph{ZC}_k$ (i.e. $w$ is on one of the attached arms or paths), then we refer to the closest vertex in $\graph{ZC}_k$ as the \dword{projection} of $w$.
Since $\graph{ZC}_k$ has cop number at least $k$, a robber can evade $k-1$ cops in $\graph{Z}_{k,m}$ simply by playing a winning strategy on $\graph{ZC}_k$, and regarding any cop on an arm or path as being at its projection.  Conversely, if $k$ cops play against the projection of the robber on $\graph{ZC}_k$, they can catch the projection.  Once this is done the robber is trapped on an arm or path.  On a path, one cop is sufficient to catch the robber simply by moving toward the robber.  On an arm, one cop can catch the robber as follows. Upon arriving at \armout{0}{x}, with the robber in $\graph{T}_y$ where $y\ge x$ or at \armout{0}{4(m-k)},
 the cop can move to \armin{0}{x} then to $c^x$.  After the robber's two responses, the only safe (i.e. non-adjacent) locations for the robber are vertices
\armout{1}{x}, \armout{2}{x}, \armout{3}{x}, \armout{4}{x}, \armout{0}{4(m-k)}, or some $\graph{T}_y$, for $y > x$.
If the robber is at \armout{4}{x}, \armout{3}{x}, or 
\armout{1}{x}, the cop moves to \armin{4}{x}, \armin{3}{x}, or \armin{1}{x}, respectively, thus
cornering the robber, and catching on the following move.  If the robber is either at \armout{2}{x}, \armout{0}{4(m-k)}, or in $\graph{T}_y$ where $y > x$, the cop should move to \armin{2}{x}, then \armout{2}{x}, and then \armout{0}{x+1}.  By following this strategy, the cop forces the robber down the arm, eventually catching the robber at \armout{0}{4(m-k)}.

For the zombie number, we first show that $m$ zombies can catch the survivor.  As in the proof of Lemma~\ref{lemma_zombie_upper}, start $k$ zombies in the graph $\graph{ZC}_k$ at a single vertex $y \in V(\graph{ZC}_k)$, calling one of 
them $z_1$.  
The remaining $m-k$ zombies $z_2, \ldots, z_{m-k+1}$ should start on vertices $v_4, v_8, \ldots, v_{4(m-k)}$ on the $P_{4(m-k)}$ that is attached to $y$.
Suppose the survivor starts at a vertex $v \in V(\graph{ZC}_k)$ or on a copy of 
$\arm_{4(m-k)}$ or $P_{4(m-k)}$ connected to $v$.  If $v$ is adjacent to $y$, all $k$ zombies on $y$ must move onto $v$.  If $v$ is at distance 2 from $y$, the $k$ zombies at $y$ play the strategy from Lemma~\ref{lemma_zombie_upper}, moving to mutual neighbors of $v$ and $y$ in $\graph{ZC}_k$.  Since this forces the survivor onto the path or one of the arms connected to $v$, on the next move all of these $k$ zombies will arrive at $v$. A third possible case is $v=y$.  In all three cases, after 1, 2, or 0 moves, respectively, we obtain a position where the survivor is on a path or arm connected to a vertex $v$, there are $k$ zombies at $v$, and none of the relative distances between the zombies have changed from their initial positions. If the survivor is on the path, then the zombies will win just by advancing down the path toward the survivor. 
Otherwise we can apply Lemma~\ref{1armUB} with with $4(m-k)$ in place of $n$ and $m-k+1$ in place of $m$. 
We have  the $m-k+1$ zombies $z_1, \ldots, z_{m-k+1}$
in the position required by Lemma~\ref{1armUB} such
that $d_{m-k+1} - d_1 \ge 4(m-k)$, so the zombies have a winning strategy.

We now show that the survivor can beat $m-1$ zombies using a strategy that has two phases. We say that a zombie in $\graph{Z}_{k,m}$ \textit{touches} $\graph{ZC}_{k}$ if it is in $V(\graph{ZC}_k)$ or adjacent to a vertex in $\graph{ZC}_k$.  If there are initially fewer than $k$ zombies touching $\graph{ZC}_k$, the survivor starts in Phase 1. Otherwise the survivor starts in Phase 2.  For Phase 1, the survivor plays the winning robber strategy on $\graph{ZC}_k$ described in \cite{BonBurg2017_Designs}, Lemma 5.4, by ignoring all cops who do not touch $\graph{ZC}_k$ and regarding those who are adjacent to $\graph{ZC}_k$ as being at their projections, and does this until at least $k$ zombies touch $\graph{ZC}_k$. Phase 1 is possible since $c(\graph{ZC}_k) = k$, which means that so long as fewer than $k$ zombies touch $\graph{ZC}_k$, the survivor cannot be caught.  Once $k$ or more zombies touch $\graph{ZC}_k$, the survivor moves to Phase 2.

As indicated above, the survivor may start in Phase 2, or transition to Phase 2 after starting in Phase 1.
If the survivor starts in Phase 2, then the survivor should start at vertex \armout{0}{0} on a copy of $\arm_{4(m-k)}$ which contains no zombies.  If the survivor transitions from Phase 1 to Phase 2, then the survivor should begin Phase 2 by moving to vertex \armout{2}{-1}, then to vertex \armout{0}{0},  on a copy of $\arm_{4(m-k)}$ which contains no zombies.  In either case, the survivor can move to a copy of $\arm_{4(m-k)}$ with no zombies since each vertex of $\graph{ZC}_k$ is attached to $m$ copies of $\arm_{4(m-k)}$, while there are only $m-1$ zombies.

Once in Phase 2, the survivor will remain in Phase 2, and win by applying Lemma~\ref{thm_one_arm}.
Since $\graph{ZC}_k$ has diameter 2, 
after the survivor moves to \armout{2}{-1} and then \armout{0}{0}, or simply starts at \armout{0}{0}, when it is the survivor's next turn to move, all the zombies who were initially touching $\graph{ZC}_k$ are now at distance 1, 2, 3, or 4 from the survivor.  Thus, supposing $d_1 \le d_2 \le \cdots \le d_{m-1}$, we conclude that 
\[(d_2 - d_1)_5 + (d_3 - d_2)_5 + \cdots + (d_k - d_{k-1})_5 \le 3.\]
Furthermore, we can conclude: 
\[\begin{split}
(d_2 - d_1)_5 + (d_3 - d_2)_5 + \cdots + (d_{m-1} - d_{m-2})_5 &\le 3+ (d_{k+1} - d_k)_5 + \cdots + (d_{m-1} - d_{m-2})_5\\
&\le 3+ 4(m-1-k)\\
&= 4(m-k)-1.
\end{split}\]
By  Lemma~\ref{thm_one_arm}, with $n=4(m-k)$, the survivor has a winning strategy.
\end{proof}

We remark that if one is not concerned with 1-cop-win graphs, but only cares about the above theorem when $k \ge 2$, then the construction and argument can be significantly simplified:
in place of the \graph{T} graph we could simply use a 
cycle $\graph{C}_5$.  The arguments basically work the same with this modification, and much of the case analysis of Lemma~\ref{1armUB} disappears.  However, the result for $k=1$ is one of the most significant concerns.

\section{Zombie number of the hypercube graph}\label{cube}

Let $\graph{Q}_n$ denote the $n$-dimensional hypercube.
In \cite{FHMP16}, Theorem 16, Fitzpatrick, Howell, Messinger, and Pike noted that $z(\graph{Q}_n) \ge \lceil\frac{2n}{3}\rceil$, which follows as a corollary from \cite{OO14}.
Conjecture~18 of \cite{FHMP16} states that $z(\graph{Q}_n) = \lceil 2n/3 \rceil$. We prove the conjecture is true. Independently, a more general result was just proven by Fitzgerald in \cite{FitzpatrickArxiv2018} which obtains the conjecture as a corollary.  Our proof is simpler, but just proves the conjecture.
Our proof is a modification of our proof of Theorem 3.1 from \cite{OO14}.  
\begin{thm}
For all $n \ge 1$, $z(\graph{Q}_n)= \lceil 2n/3\rceil$.
\end{thm}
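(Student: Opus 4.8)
The plan is to prove the two bounds separately. The lower bound $z(\graph{Q}_n) \ge \lceil 2n/3 \rceil$ is already noted in the excerpt as following from \cite{OO14}, so the real work is the upper bound: exhibiting a strategy for $\lceil 2n/3 \rceil$ zombies to catch the survivor on $\graph{Q}_n$. I would set $k = \lceil 2n/3 \rceil$ and, following the remark that this proof is a modification of the proof of Theorem 3.1 in \cite{OO14}, adapt the cop strategy used there for $\graph{Q}_n$ into a legal zombie strategy. Identify $V(\graph{Q}_n)$ with $\{0,1\}^n$, so that distance is Hamming distance, and recall the key feature of the hypercube that makes coordinate-wise arguments work: if a zombie at $u$ and the survivor at $w$ differ in coordinate set $S$, then a move of the zombie that flips one coordinate in $S$ decreases the distance, and is therefore a legal zombie move, while flipping a coordinate outside $S$ is illegal. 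So a zombie strategy on $\graph{Q}_n$ is exactly a rule for assigning to each zombie, each turn, a coordinate on which it currently disagrees with the survivor.

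The heart of the argument will be to partition the $n$ coordinates among the zombies so that, collectively, the zombies ``cover'' all coordinates with enough redundancy that the survivor cannot keep flipping coordinates faster than the zombies correct them. Concretely, I would assign to each zombie a block of coordinates (roughly three coordinates per pair of zombies, hence the $2n/3$ count: each zombie is ``responsible'' for about $3/2$ coordinates on average, but responsibilities overlap so that every coordinate is watched by a zombie that can afford to fix it). On each turn the survivor flips at most one coordinate; a zombie responsible for that coordinate flips it back, and the remaining zombies use their moves to flip coordinates on which they still disagree with the survivor, making monotone progress on a suitable potential function — e.g. the sum over zombies of the number of coordinates in that zombie's assigned block on which it disagrees with the survivor, or a lexicographic refinement thereof. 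The zombie legality constraint is automatically satisfied because each such flip is on a coordinate of current disagreement. One shows this potential cannot increase and must eventually hit zero for some zombie, at which point that zombie occupies the survivor's vertex.

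The main obstacle I expect is the bookkeeping that makes the potential strictly decrease despite the legality restriction: a cop is free to ``wait'' on a coordinate it already agrees with, but a zombie must move every turn and must move closer, so a zombie that has finished its assigned block is forced to flip some other coordinate, possibly undoing another zombie's work or a coordinate the survivor has not yet touched. Handling this is exactly why the count is $2n/3$ rather than the cop number $\lceil (n+1)/2 \rceil$: one needs enough zombies that every coordinate is over-covered, so a ``forced'' extra move by an idle zombie can always be absorbed — either it is a move toward fixing a coordinate some other zombie is responsible for, or the blocks are arranged (e.g. cyclically, in overlapping triples) so that forced moves cycle harmlessly. I would make this precise by defining the blocks as consecutive triples of coordinates assigned to consecutive pairs of zombies around a cycle, tracking for each triple how many of its coordinates are ``wrong,'' and checking that the vector of these counts decreases in a well-founded order under the prescribed responses; the parity/ceiling cases ($n \equiv 0,1,2 \pmod 3$) get handled by a small ad hoc adjustment of one or two blocks. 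The lower bound requires only citing \cite{OO14} and checking that the hypothesis there applies to $\graph{Q}_n$, which is routine.
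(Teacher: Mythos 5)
Your overall architecture is right -- cite the lower bound, identify a zombie move with flipping a coordinate of disagreement, and get the count $\lceil 2n/3\rceil$ from assigning each zombie responsibility for roughly $3/2$ coordinates. That counting intuition matches the paper's construction, where $d$ odd-parity zombies take consecutive home coordinates $1,\ldots,d$ and $e$ even-parity zombies take homes $d+2,d+4,\ldots,d+2e$, with $d+2e\ge n$ doing exactly the over-coverage you describe. But there is a genuine gap at the heart of your plan: the capture mechanism. You propose that a block-disagreement potential ``must eventually hit zero for some zombie, at which point that zombie occupies the survivor's vertex.'' A zombie that agrees with the survivor on its assigned block of two or three coordinates is nowhere near the survivor's vertex; and worse, since $\graph{Q}_n$ is bipartite, a zombie whose distance to the survivor has the wrong parity can \emph{never} land on the survivor by its own move, no matter how the potential evolves. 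So even a perfectly executed monotone-decrease argument of the kind you sketch terminates in a position that is not a win.

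The missing idea is that the endgame is a trapping argument, not a convergence argument. Each zombie should maintain a ``reach'': the longest run of consecutive coordinates, starting at its home and going rightward (cyclically), on which it agrees with the survivor. This quantity is chosen so that every zombie can always at least preserve it (by re-flipping whatever coordinate the survivor just flipped), which repairs the maintenance problem you flagged -- your block count can be re-broken by the survivor with no compensating gain elsewhere. A zombie whose reach hits $n-1$ (odd parity) or $n-2$ (even parity) does not catch the survivor; instead it \emph{closes} the remaining one or two coordinates, meaning the survivor is captured immediately upon flipping any of them. The spacing of the homes guarantees that every survivor move extends the reach of some not-yet-critical zombie, so eventually all $n$ coordinates are closed; the survivor then cannot move safely and cannot stand still indefinitely (standing still lets every zombie strictly approach, and also flips all parities, forcing a re-assignment of homes that your proposal would also need to accommodate). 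Without the threat/closure mechanism and the even/odd bookkeeping, the proposal does not close.
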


\begin{proof}

In light of Theorem 16 from \cite{FHMP16}, it suffices to show that $\lceil 2n/3 \rceil$ zombies is enough to catch the survivor, by describing a winning zombie strategy.

We follow standard notation and write a vertex of $\graph{Q}_n$ as a binary vector of dimension $n$.  We will refer to the $n$ coordinates of the vector as coordinates $1, 2, \ldots, n$.  Since a move from one vertex to another changes (or \dword{flips}) one of the coordinates, we describe moves of the zombies and survivor in terms of flipping coordinates. For example, if $n=6$, a move from $(001101)$ to $(001001)$ is achieved by flipping the fourth coordinate.

If it is the survivor's turn, we say that a zombie is \dword{even} if its distance to the survivor is even, and otherwise we say that zombie is \dword{odd}. If it is the zombies' turn,
we say that a zombie is \dword{even} if its distance to the survivor is odd, and otherwise we say that zombie is \dword{odd}; i.e. on the zombies' turn we think of measuring a zombie's parity after it moves. 

We can now describe the zombie strategy.
We break the zombies into equal size groups or into groups whose sizes differ by at most one, 
putting one group (Group I) all on the single vertex $(0 \ldots 0 0)$ and the other (Group II) on
$(0 \ldots 0 1)$, more on the latter if need be. 
Since $\graph{Q}_n$ is bipartite, whatever the zombies or survivor do throughout the course of play, on either player's turn, all the zombies in Group I will have the same parity as one another (i.e. they are all even or all odd), and all the zombies in Group II will have the opposite parity.

On the zombies' first turn, and on every turn that occurs immediately after the survivor remains stationary, they will carry out
an organizational action we call a 
\dword{home setting}.
Suppose there are $e$ even zombies and $d$ odd zombies.
The $d$ odd zombies each choose a unique coordinate from among the coordinates $1, \ldots, d$ as their \dword{home coordinate}.  The $e$ even zombies choose unique coordinates among $d+2, d+4, \ldots, d+2e$ as their home coordinates.
Throughout the proof, we understand coordinates to \dword{wrap around}, that is,
for any coordinate $N>n$ we interpret this as the coordinate $k$ so that $k \equiv N \pmod{n}$.  A short calculation shows that the homes at least reach coordinate $n$ 
(i.e. $d+2e \ge n$), since $d + e = \lceil 2n/3 \rceil$ and $e \ge \lfloor (1/2) \lceil 2n/3 \rceil \rfloor$.
Every time the survivor chooses to remain stationary, all zombies have their parity flipped, i.e. all even zombies become odd, and all odd zombies become even.  Thus every time the survivor remains stationary, the zombies will carry out a new home setting.

From a zombie's home, say coordinate $k$, it regards the coordinates $k+1, k+2,  \ldots, n, 1, 2, \ldots k-1$, in that order, as being to the right of $k$. 
Define the \dword{reach} of the zombie with home at coordinate $k$ to be the number of consecutive coordinates on which the zombie's vector matches the survivor's, starting at coordinate $k$ and going right.  
Each zombie follows the same strategy relative to its home:
when it is the zombies' turn, each zombie simply flips the coordinate which makes its reach as large as possible.  For example, suppose $n = 7$ and the survivor is at vertex $(0000000)$.  Suppose a zombie is at vertex $(0110010)$ and its home is coordinate $4$.  Then this zombie's reach is currently $2$, and if it were the zombies' turn, this zombie would flip coordinate $6$ to make its reach $5$.
We will show that this strategy eventually catches the survivor.  

We say that a zombie has a \dword{critical reach} (only measured right before the survivor's turn) if its reach is $n-2$ for an even zombie, or $n-1$ for an odd zombie.
An even zombie with critical reach has two coordinates that differ from the survivor's; 
if the survivor flips one of these two coordinates, then that zombie flips the other to catch the survivor.
An odd zombie with critical reach has one coordinate that differs from the survivor's;
if the survivor flips that one, then the survivor has landed on the zombie and is caught.
Thus, when a zombie has critical reach, we say that the coordinate to the left of its home (or the two coordinates to the left of its home, in the case of an even zombie) are \dword{closed} to the survivor and assume that the survivor never flips such a coordinate.
 
Note that each time the survivor stays still, each zombie gets one step closer, so the survivor can only do this a finite number of times, and eventually we can assume the survivor always moves and so there is no home setting and no parity change for any zombie (recall that, except for the beginning of the game, there is only a home setting when the survivor remains stationary).
We consider the game from that point on, where the survivor never stays still.

We will show that eventually all the zombies will have critical reach, unless the survivor gets caught earlier.  
Once all the zombies have critical reach, since the homes at least reach coordinate $n$, all the coordinates will be closed to the survivor, so the survivor will lose no matter what is done.

To finish the proof and show that eventually all the zombies will have critical reach, it suffices to show that whatever the survivor does, every zombie's reach can at least be maintained, and some zombie without a critical reach can have its reach extended.  
Suppose the survivor flips coordinate $k$, which as mentioned above, we can assume is not a closed coordinate. 
Because of the positioning of the zombie homes and the fact that the zombie homes extend out to at least coordinate $n$, at least one of the coordinates, $k+1$ or $k+2$, is the home of some zombie (where coordinates may wrap around as mentioned above).
 If coordinate $k+1$ is the home of a zombie, then since coordinate $k$ was not yet closed to the survivor, that means the zombie with home $k+1$ did not yet have critical reach, so its reach can be extended. If coordinate $k+1$ is home to no zombie, but $k+2$ is the home of a zombie (in this case, it must be an even zombie), then since coordinate $k$ was not yet closed to the survivor, that means the zombie with home $k+2$ did not yet have critical reach, so its reach can be extended. In both cases, all other zombies can at least maintain their reach just by flipping the same coordinate the survivor last flipped.
\end{proof}

\end{document}